\newtheorem{prop}{Proposition}[section]
\newtheorem{conj}{Conjecture}[section]
\theoremstyle{definition} \newtheorem{defin}{Definition}[section]
\newtheorem{ex}{Example}[section] \theoremstyle{remark}
\newtheorem{rem}{Remark}[section] \newcommand{\pn}{\par\noindent}
\newcommand{\pmn}{\par\medskip\noindent}
\begin{document}
\title{Mathematical and physical billiard in pyramids}
\author{Yury Kochetkov and Lev Pyatko}
\date{}
\begin{abstract} In this experimental work we study billiard
trajectories in triangular pyramids and try to establish
conditions that guarantee the existence (or absence) of 4-cycles
(there can be not more, than three of them). We formulate
conjectures and prove some statements. For example, if a pyramid
has two orthogonal faces, then it has not more than two 4-cycles.
Also we study 4-cycles of the "physical" billiard in pyramids,
i.e. in the presence of gravity. Here we present our observations
for a generic case. \end{abstract}

\email{yukochetkov@hse.ru, lapyatko@edu.hse.ru} \maketitle

\section{Introduction}
\pn The two-dimensional mathematical billiard is a vast and well
known part of geometry (see \cite{GZ} or \cite{Ta}). However, the
3-dimensional billiard is much less known and the physical
billiard (in the presence of gravity) is, probably, a novel area
of investigation. We are interested in the simplest problem here:
the existence of 4-cycles in triangular pyramids. Cycles in
pyramids were studied in \cite{TZ} and \cite{Be}. In \cite{TZ}
periodic trajectories in right-angled tetrahedra were constructed,
but there 4-cycles do not exist. In \cite{Be} only pyramids in a
neighborhood of the regular tetrahedron were considered. \pmn Our
work is organized in the following way. In the second section we
explain how one can find a 4-cycle with the given order of
reflections. In the third section we introduce the "map of
cycles", i.e. for the given base of a pyramid and a variable
altitude we study 4-cycles with the given order of reflections. In
the forth section we prove statements about existence and behavior
of 4-cycles in some special cases. In the fifth section we present
results of our investigations in the physical case, i.e. in the
presence of gravity.

\section{Construction of 4-cycles}
\pn Usually we will work with pyramid $ABCD$ with base $ABC$ in
the $xy$ plane, the apex $D$ is in the upper half-space, $A$ ---
at the origin, $B$ --- at the positive $x$-axis, $C$ --- in the
upper half plane, $DO$ will be the altitude and $h=|DO|$. Each
4-cycle is determined by an order of reflections. There are three
possible trajectories, because each trajectory can be passed in
both directions:
\[\begin{picture}(370,90) \put(10,20){\line(1,0){80}}
\put(10,20){\line(5,6){50}} \qbezier[60](10,20)(55,35)(100,50)
\put(90,20){\line(1,3){10}} \put(90,20){\line(-1,2){30}}
\put(60,80){\line(4,-3){40}} \put(4,17){\tiny A} \put(93,17){\tiny
B} \put(64,79){\tiny D} \put(103,48){\tiny C}
\put(60,25){\circle*{2}} \put(40,40){\circle*{2}}
\put(55,50){\circle*{2}} \put(85,50){\circle*{2}}
\put(60,25){\vector(-4,3){19}} \put(40,40){\vector(3,2){14}}
\put(55,50){\vector(1,0){29}} \put(85,50){\vector(-1,-1){24}}
\put(10,11){\tiny ABC $\to$ ABD $\to$ ACD $\to$} \put(20,1){\tiny
$\to$ BCD $\to$ ABC}

\put(140,20){\line(1,0){80}} \put(140,20){\line(5,6){50}}
\qbezier[60](140,20)(185,35)(230,50) \put(220,20){\line(1,3){10}}
\put(220,20){\line(-1,2){30}} \put(190,80){\line(4,-3){40}}
\put(134,17){\tiny A} \put(223,17){\tiny B} \put(194,79){\tiny D}
\put(233,48){\tiny C} \put(200,35){\circle*{2}}
\put(180,45){\circle*{2}} \put(200,35){\vector(-2,1){19}}
\put(170,35){\circle*{2}} \put(180,45){\vector(-1,-1){10}}
\put(215,50){\circle*{2}} \put(170,35){\vector(3,1){44}}
\put(215,50){\vector(-1,-1){14}} \put(140,11){\tiny ABC $\to$ ACD
$\to$ ABD $\to$} \put(150,1){\tiny $\to$ BCD $\to$ ABC}

\put(270,20){\line(1,0){80}} \put(270,20){\line(5,6){50}}
\qbezier[60](270,20)(315,35)(360,50) \put(350,20){\line(1,3){10}}
\put(350,20){\line(-1,2){30}} \put(320,80){\line(4,-3){40}}
\put(264,17){\tiny A} \put(353,17){\tiny B} \put(324,79){\tiny D}
\put(363,48){\tiny C} \put(315,30){\circle*{2}}
\put(325,45){\circle*{2}} \put(315,30){\vector(2,3){9}}
\put(345,50){\circle*{2}} \put(325,45){\vector(4,1){19}}
\put(305,50){\circle*{2}} \put(345,50){\vector(-1,0){39}}
\put(305,50){\vector(1,-2){10}} \put(270,11){\tiny ABC $\to$ ABD
$\to$ BCD $\to$} \put(280,1){\tiny $\to$ ACD $\to$ ABC}
\end{picture}\]  \begin{center} Figure 1 \end{center}
\pmn Let $p_0$ be the starting point of our trajectory (usually it
will be a point in the base $ABC$), $\bar v$ be the starting
vector and $F_1F_2F_3F_4$ be the order of reflections. Each
reflection defines an orthogonal operator $R_i$ with the matrix
$M_i$. The composition $R_1\circ R_2\circ R_3\circ R_4$ is a
rotation with the matrix $M=M_4M_3M_2M_1$. Thus, $\bar v$ is the
eigenvector of $M$ with the eigenvalue 1.
\begin{rem} $R_1\circ R_2$ is a rotation around an edge and $R_3\circ R_4$
also is a rotation around another edge. These two rotations are
rotations around \emph{skew lines}, hence, the composition
$R_1\circ R_2\circ R_3\circ R_4$ cannot be the identity. \end{rem}
\begin{ex} Let $A=(0,0,0)$, $B=(4,0,0)$, $C=(2,4,0)$ and $D=(2,3,3)$, $p_0\in
ABC$ and  the order of reflections be $ABD\to ACD\to BCD\to ABC$.
Let $M_1,M_2,M_3,M_4$ be matrices of reflections with respect to
planes $ABD,ACD,BCD,ABC$, respectively. Here {\scriptsize
$$M_4M_3M_2M_1\!=\!\left(\!\!\!\!\begin{array}{ccr} 1&0&0\\
0&1&0\\ 0&0&-1\end{array}\!\!\!\!\right)\!\!
\left(\!\!\!\!\begin{array}{rrr}-13/23&-18/23&-6/23\\ -18/23&14/23&-3/23\\
-6/23&-3/23&22/23\end{array}\!\!\!\!\right)\!\!
\left(\!\!\!\!\begin{array}{rrr}
-13/23&18/23&6/23\\ 18/23&14/23&-3/23\\
6/23&-3/23&22/23\end{array}\!\!\!\!\right)\!\!
\left(\!\!\!\!\begin{array}{ccc} 1&0&0\\
0&0&1\\0&1&0\end{array}\!\!\!\!\right)\!=\!
\frac{1}{529}\!\!\left(\!\!\!\!\begin{array}{rrr} -191&-156&-468\\
468&-216&-119\\ -156&-457&216\end{array}\!\!\!\!\right)\!=\!M$$}
The vector $\bar v=(-13,-12,24)$ is the the eigenvector with
eigenvalue $1$. \pmn Now we must find the starting point. We
construct the pyramid $ABC_1D$, where $C$ and $C_1$ are symmetric
with respect to the plane $ABD$. Then we construct the pyramid
$AB_1C_1D$, where $B$ and $B_1$ are symmetric with respect to the
plane $AC_1D$. And then we construct the pyramid $A_1B_1C_1D$,
where $A$ and $A_1$ are symmetric with respect to the plane
$B_1C_1D$. Here $C_1=(2,0,4)$,
$B_1=\left(-\frac{52}{23},\frac{24}{23},\frac{72}{23}\right)$,
$A_1=\left(-\frac{432}{529},\frac{1176}{529},\frac{3528}{529}\right)$.
We replace a polygonal trajectory of our 4-cycle by a line that
connects a point $F\in\Delta ABC$ with barycentric coordinates
$(x,y,z)$ and a point $F_1\in \Delta A_1B_1C_1$ with the same
barycentric coordinates. If $F$ is the starting point of the
4-cycle, then the vector $\overline{FF}_1$ is collinear to the
vector $\bar v$. This condition defines $x$, $y$ and $z$:
{\scriptsize
$$x=\frac{115}{1778}\,,\,\, y=\frac{583}{1778}\,,\,\,
z=\frac{540}{889} \Rightarrow F=\left(\frac{2246}{889},
\frac{2160}{889}, 0 \right)\,.$$} It remains to check that the
line $FF_1$ intersects planes $B_1C_1D$, $AC_1D$ and $ABD$
\emph{inside} triangles $\Delta B_1C_1D$, $\Delta AC_1D$ and
$\Delta ABD$, respectively. \end{ex}

\begin{prop} For a given order of reflections we have either one
4-cycle, or none. \end{prop}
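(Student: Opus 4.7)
The plan is to decouple the direction of the cycle from its starting point, and to show that each is determined uniquely by the order of reflections $F_1F_2F_3F_4$. The direction $\bar v$ must satisfy $M\bar v=\bar v$; since the Remark guarantees that $M=M_4M_3M_2M_1\neq I$ is a nontrivial rotation, its $1$-eigenspace is one-dimensional, so $\bar v$ is determined up to a nonzero scalar.

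Once $\bar v$ is fixed, the starting point $F\in\Delta F_4$ is constrained, via the three-step unfolding of Example 2.1, by $\tilde S(F)-F\parallel\bar v$, where $\tilde S$ is the affine isometry from $\Delta F_4$ onto its unfolded image $\Delta F_4'$. Computing the linear parts of the three unfolding reflections and composing them (each conjugation by the earlier unfoldings cancels neatly), one finds that the linear part $S$ of $\tilde S$ equals $M_1M_2M_3$. Since $M_4 M=M_3M_2M_1=S^{-1}$, this yields the key identity $S=M^{-1}M_4$, which will drive the uniqueness argument.

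Suppose that $F,F'\in\Delta F_4$ both satisfy the condition, and set $w=F-F'$. Taking the difference of the two conditions and using the affine structure of $\tilde S$ gives $(S-I)w\in\mathbb{R}\bar v$. Because $w$ is parallel to the face $F_4$, the linear reflection $M_4$ fixes $w$; substituting $S=M^{-1}M_4$ then reduces the condition to $(I-M)w\in\mathbb{R}\bar v$. For any $x\in\mathbb{R}^3$ a direct check $\langle(I-M)x,\bar v\rangle=\langle x,\bar v\rangle-\langle x,M^{-1}\bar v\rangle=0$ shows that the image of $I-M$ is contained in $\bar v^{\perp}$. Hence $(I-M)w\in\mathbb{R}\bar v\cap\bar v^{\perp}=\{0\}$, which forces $w\in\ker(I-M)=\mathbb{R}\bar v$. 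As $w$ also lies in the plane of $F_4$, either $\bar v$ is not parallel to $F_4$ and $w=0$, giving uniqueness, or $\bar v\parallel F_4$, in which case the ``trajectory'' is tangent to $F_4$ and no genuine 4-cycle exists.

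The main obstacle will be the algebraic bookkeeping behind $S=M_1M_2M_3$ and the consequent identity $S=M^{-1}M_4$: the unfoldings are reflections through the successive \emph{images} of the faces $F_2,F_3$, not through the original faces, so the cancellation of the intermediate conjugations must be written out carefully. Once this identity is in place, the rest is elementary linear algebra about the rotation $M$ and its axis $\bar v$.
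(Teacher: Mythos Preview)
Your argument is correct, and the key identity $S=M_1M_2M_3=M^{-1}M_4$ is exactly right (the conjugations in the successive unfoldings do telescope as you claim). However, your route is genuinely different from the paper's. The paper also reduces to the unfolding $\tilde S$ and observes that finding $F$ is a linear system, hence has $0$, $1$, or a line of solutions; but to exclude the line it argues geometrically rather than algebraically. It notes that $\tilde S$ (their $R$), being a composite of three reflections, is an improper isometry---a rotary reflection about some axis $L$---and shows that a line $\ell$ of solutions would have to satisfy $\ell\parallel \tilde S(\ell)$, forcing $\ell\parallel L$; but then the reflection component of the rotary reflection reverses the barycentric order along $\ell$, contradicting the parallelogram $PQQ_1P_1$. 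Your approach instead pushes everything through the \emph{rotation} $M$ via $S=M^{-1}M_4$, and finishes with the clean observation $\operatorname{im}(I-M)\subset\bar v^{\perp}$, so $(I-M)w\in\mathbb{R}\bar v$ forces $w\in\ker(I-M)=\mathbb{R}\bar v$. What the paper's version buys is a vivid geometric picture (axis of a rotary reflection, order reversal); what yours buys is a shorter, fully linear-algebraic argument that also makes the degenerate case $\bar v\parallel F_4$ explicit rather than hidden inside ``the system has no solution.''
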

\begin{proof} After constructing pyramids $ABC_1D$, $AB_1C_1D$ and
$A_1B_1C_1D$, we solve a linear system to find a starting point
$F$. But a linear system has either a unique solution, or
infinitely many (a line), or none. We must demonstrate that
infinite case is impossible. \pmn Let us assume that points of a
line $\ell\subset ABC$ are solutions of our system. Points with
the same barycentric coordinates constitute a line $\ell_1\subset
A_1B_1C_1$. Let points $P\in \ell$ and $P_1\in \ell_1$ have the
same coordinates and points $Q\in \ell$ and $Q_1\in \ell_1$ also.
Then lines $PP_1$ and $QQ_1$ are parallel. As $|PQ|=|P_1Q_1|$,
then $PQQ_1P_1$ is a parallelogram, i.e. $\ell\parallel \ell_1$.
\pmn Let $R$ be the composition of reflections with respect to
planes $ABD,AC_1D,B_1C_1D$. Then R is a rotation $\rho$ around
some line $L$ and a reflection $\pi$ with respect to the plane
$\Pi$, orthogonal to $L$. As $R$ maps $\ell$ into $\ell_1$, then
$\ell$ is parallel to $L$. But the reflection $\pi$ changes the
barycentric order of points in $\ell$ into opposite.
\end{proof}

\section{Computations and conjectures}
\pn Let $ABCD$ be a pyramid, where $\Delta ABC$ is a fixed acute
triangle in the plane $xy$. The point $O$ --- the base of the
altitude $DO$ is fixed, but the height $h=|DO|$ is variable. Let
$ABC\to ACD\to ABD\to BCD \to ABC$ be the order of reflections.
Then a 4-cycle with this order of reflections either
\begin{enumerate} \item exists for all $h>a\geqslant 0$;
\item or exists for $0\leqslant a<h<b<\infty$; \item or does not
exist for all $h>0$.
\end{enumerate}
\begin{defin} Points $O$ for which the first case is realized
constitute the $\alpha$-set. Points $O$ for which the second case
is realized constitute the $\beta$-set. Points $O$ for which the
third case is realized constitute the $\gamma$-set. The
arrangement of these three sets in the plane will be called the
\emph{the map of cycles}.
\end{defin}
\begin{rem} $\alpha$-set for the cycle $ABC\to ACD\to ABD\to
BCD\to ABC$ and $\alpha$-set for the cycle $ABC\to ABD\to ACD\to
BCD\to ABC$ are, of course, different sets.
\end{rem}
\pn How one can describe these sets? We can give only a partial
answer. Let $ABC$ be a fixed acute triangle: $A$ at origin, $B$ at
the positive $x$-axis, $C$ at the upper half-plane. We will
describe the map of cycles for the order $ABC\to ACD\to ABD\to
BCD\to ABC$. The construction of the map is performed in the
following steps (Figure 2): a) we rotate $\Delta ABC$ on $\pi$
around the center of $AB$ and obtain the triangle $ABC'$; b) we
draw altitudes $AG,AG',BF,BF'$; c) we draw the lines $CC'$, $FF'$
and $GG'$ (the last two lines are parallel).
\[\begin{picture}(400, 90)
    \put(10, 50){\line(1, 0){60}}
    \put(10, 50){\line(1, 2){20}}
    \put(30, 90){\line(1, -1){40}}
    \put(4, 48){\tiny A}
    \put(71, 48){\tiny B}
    \put(32, 89){\tiny C}

    \put(85, 48){$\Rightarrow$}

    \put(110, 50){\line(1, 0){60}}
    \put(110, 50){\line(1, 2){20}}
    \put(130, 90){\line(1, -1){40}}
    \put(104, 48){\tiny A}
    \put(171, 48){\tiny B}
    \put(132, 89){\tiny C}
    \put(110, 50){\line(1, -1){40}}
    \put(150, 10){\line(1, 2){20}}
    \put(152, 7){\tiny C$'$}

    \put(185, 48){$\Rightarrow$}

    \put(210, 50){\line(1, 0){60}}
    \put(210, 50){\line(1, 2){20}}
    \put(230, 90){\line(1, -1){40}}
    \put(204, 48){\tiny A}
    \put(271, 48){\tiny B}
    \put(232, 89){\tiny C}
    \put(210, 50){\line(1, -1){40}}
    \put(250, 10){\line(1, 2){20}}
    \put(252, 7){\tiny C$'$}
    \qbezier[40](210, 50)(225, 65)(240, 80)
    \qbezier[40](270, 50)(255, 35)(240, 20)
    \qbezier[40](210, 50)(235, 37.5)(258, 26)
    \qbezier[40](270, 50)(246, 62)(222, 74)
    \put(217, 74){\tiny F}
    \put(231, 17){\tiny F$'$}
    \put(240, 20){\circle{1}}
    \put(222, 74){\circle{1}}
    \put(242, 79){\tiny G}
    \put(260, 24){\tiny G$'$}
    \put(258, 26){\circle{1}}
    \put(240, 80){\circle{1}}

    \put(285, 48){$\Rightarrow$}

    \put(310, 50){\line(1, 0){60}}
    \put(310, 50){\line(1, 2){20}}
    \put(330, 90){\line(1, -1){40}}
    \put(304, 48){\tiny A}
    \put(371, 48){\tiny B}
    \put(330, 91){\tiny C}
    \put(310, 50){\line(1, -1){40}}
    \put(350, 10){\line(1, 2){20}}
    \put(352, 7){\tiny C$'$}
    \qbezier[40](310, 50)(325, 65)(340, 80)
    \qbezier[40](370, 50)(355, 35)(340, 20)
    \qbezier[40](310, 50)(335, 37.5)(358, 26)
    \qbezier[40](370, 50)(346, 62)(322, 74)
    \put(315, 74){\tiny F}
    \put(331, 17){\tiny F$'$}
    \put(340, 20){\circle{1}}
    \put(322, 74){\circle{1}}
    \put(342, 79){\tiny G}
    \put(360, 24){\tiny G$'$}
    \put(358, 26){\circle{1}}
    \put(340, 80){\circle{1}}
    \put(346, 2){\line(-1, 3){30}}
    \put(366, 2){\line(-1, 3){32}}
    \linethickness{0.3mm}
    \qbezier(352, 2)(338, 58)(326, 106)
\end{picture} \] \begin{center} Figure 2\end{center}

\begin{conj} {\bf The map of cycles in the upper half-plane.} (See
Figure 3). The infinite polygonal domain $QNMR$ minus the segment
$[G,K]$ and the ray $FR$ is the $\alpha$-set. The union of open
triangles $\Delta AFM$ and $\Delta BGN$ and the open infinite
sector with vertex $K$, bounded by rays $KP$ and $KQ$, is the
$\beta$-set. All other points of the upper half-plane belong to
the $\gamma$-set. \end{conj}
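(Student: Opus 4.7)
The plan is to apply the unfolding construction of Section~2 and convert existence of the 4-cycle into a system of linear inequalities in the three parameters: $O=(x,y)$, the foot of the altitude, and $h=|DO|$. Unfolding in the given order, one reflects through $\Delta ACD$ to send $B\mapsto B_*$, then through $\Delta AB_*D$ to send $C\mapsto C_*$, then through $\Delta B_*C_*D$ to send $A\mapsto A_*$, while the apex $D$ is fixed throughout. By Proposition~2.1 the 4-cycle either does not exist or is determined uniquely by a pair $F\in\Delta ABC$, $F_*\in\Delta A_*B_*C_*$ with matching barycentric coordinates, and existence is equivalent to the segment $FF_*$ meeting each of $\Delta ACD$, $\Delta AB_*D$, $\Delta B_*C_*D$ in its interior. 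The coordinates of these piercings are rational functions of $(x,y,h)$, so the classification of $O$ reduces to studying the $h$-sections of a semi-algebraic set.

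I would first analyse the regime $h\to+\infty$. In this limit the three lateral faces tend to the vertical planes through $AB$, $AC$, $BC$, and the three lateral reflections degenerate to the planar 3-periodic billiard in $\Delta ABC$ with bounce order $AC,AB,BC$, which for an acute triangle is the orthic trajectory. The perturbation from this limit is of order $1/h$ and depends affinely on $O$: the 3D trajectory persists iff this perturbation keeps the starting point inside $\Delta ABC$ and each piercing inside its face. I expect the resulting set of admissible $O$ to be precisely the polygon $QNMR$, whose sides are supported by the four altitudes $AG,AG',BF,BF'$ of Figure~2. The excluded segment $[G,K]$ and ray $FR$ should arise as loci where one piercing point sits on the boundary of its face for \emph{every} $h$, so the cycle degenerates rather than properly exists.

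Next I would analyse the limit $h\to 0^+$. A Taylor expansion of the piercing inequalities at $h=0$ should show that the sign of each degenerating inequality is controlled by which side of the auxiliary lines $CC'$, $FF'$, $GG'$ the point $O$ lies on. Within $QNMR$, points where some $h\to 0^+$ sign is opposite to the corresponding $h\to\infty$ sign acquire a bounded interval of existence and belong to $\beta$; points where all signs agree have no transition and lie in $\gamma$. Matching these subregions against Figure~3 should recover precisely the open triangles $\Delta AFM$ and $\Delta BGN$ together with the open sector at $K$ bounded by rays $KP$ and $KQ$.

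The crux, and the main obstacle, is a monotonicity lemma: for fixed $(x,y)$, each piercing inequality changes sign in $h$ at most once. Granted this, the existence set in $h$ is a single (possibly infinite) interval and the two asymptotic analyses above determine its endpoints. I expect monotonicity to follow from explicit formulas showing that, after clearing denominators, each inequality is a low-degree polynomial in $h$ whose dominant and constant coefficients carry the same $(x,y)$-linear forms as the lines in Figure~2. Verifying this --- and in particular ruling out intermediate sign changes that would create nested existence intervals --- together with a careful check that the excluded locus $[G,K]\cup FR$ corresponds to genuine non-existence rather than mere tangency, is where I expect the argument to demand the most computational care.
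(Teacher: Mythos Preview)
The paper offers no proof of this statement: it is explicitly stated as a \emph{Conjecture} (Conjecture~3.1), supported only by the numerical experiments and the map-of-cycles construction described in Section~3. So there is no argument in the paper against which your proposal can be compared.

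As for the proposal itself, it is a reasonable \emph{strategy} but not a proof, and you say as much. The unfolding reduction to piercing inequalities is exactly the mechanism the paper uses computationally, and the two asymptotic regimes $h\to\infty$ and $h\to 0^+$ are the natural probes for distinguishing $\alpha$-, $\beta$-, and $\gamma$-points. However, every substantive claim is hedged (``I expect the resulting set of admissible $O$ to be precisely the polygon $QNMR$'', ``should recover precisely the open triangles'', ``I expect monotonicity to follow''). The crux you isolate --- that for fixed $O$ each piercing inequality changes sign in $h$ at most once --- is precisely the content of the conjecture, and you do not establish it. Note also that the authors emphasize that already in the lower half-plane, and for right or obtuse base triangles, the picture is ``much more complicated'' (Remarks~3.2 and~3.3); this suggests that any monotonicity argument must genuinely use both acuteness of $\Delta ABC$ and the restriction to the upper half-plane, which your outline does not yet exploit.

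In short: the paper treats this as open, and your proposal, while pointing in a plausible direction, leaves the decisive step (the monotonicity lemma, together with the identification of the boundary loci with the specific altitudes and lines of Figure~2) unproved.
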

\[\begin{picture}(400,100) \put(200,5){\vector(0,1){100}}
\put(140,40){\line(3,-1){60}} \put(140,40){\line(3,2){60}}
\put(200,20){\line(3,2){60}} \put(200,80){\line(3,-1){60}}
\put(200,50){\line(6,1){180}} \put(200,50){\line(-6,-1){180}}
\put(170,60){\line(-3,-1){140}} \put(170,60){\line(3,1){100}}
\put(230,40){\line(3,1){140}} \put(230,40){\line(-3,-1){100}}
\put(202,14){\tiny A} \put(194,81){\tiny B} \put(139,33){\tiny C}
\put(258,62){\tiny C${}'$} \qbezier[50](200,20)(185,40)(170,60)
\qbezier[50](200,80)(215,60)(230,40)
\qbezier[60](200,20)(207,48)(214,75)
\qbezier[60](200,80)(193,52)(185,25) \put(167,62){\tiny G}
\put(212,77){\tiny G${}'$} \put(184,18){\tiny F}
\put(228,33){\tiny F${}'$} \put(80,33){\tiny K}  \put(14,18){\tiny
P} \put(23,11){\tiny Q} \put(204,103){\tiny x} \put(194,31){\tiny
M} \put(202,65){\tiny N} \put(123,5){\tiny R}
\end{picture}\]
\begin{center} Figure 3 \end{center}

\begin{rem} Situation in the lower half-plane is much more
complicated. In particular, there are points \emph{inside} $\Delta
ABC'$ that belong to the $\gamma$-set. \end{rem}

\begin{rem} Cases of a right or an obtuse triangles $ABC$ are also much
more complicated.
\end{rem}

\begin{ex} Let us consider the acute triangle $\Delta ABC$,
$A=(0,0)$, $B=(15,0)$, $C=(5,10)$. The line $l: 3x+y=\frac{45}{2}$
is the central line of the $\alpha$-set in the upper half-plane.
Here is the plot of the function $a(y)$, $(x,y)\in l$, $0\leqslant
y<\infty$\,:
\[\begin{picture}(140,150) \put(0,10){\vector(1,0){140}}
\put(134,4){\tiny y} \put(10,2){\vector(0,1){145}}
\put(3,142){\tiny a} \qbezier(10,85)(50,75)(81,51)
\qbezier(81,51)(90,49)(100,47) \qbezier(100,47)(115,80)(130,140)
\qbezier[40](81,10)(81,30))(81,51)
\qbezier[35](100,10)(100,24)(100,47) \put(79,4){\tiny $u$}
\put(99,4){\tiny $v$} \end{picture}\]
\begin{center} Figure 4\end{center}
\pn Here $a(0)=7.5$; $a(u)\approx 4.1$, where $u\approx 7.1$ is
the $y$-coordinate of the intersection point of $l$ and the circle
$x^2+y^2-45x=0$; $a(v)\approx 3.7$,where $v=9$ is the
$y$-coordinate of the intersection point of $l$ and $AC$.
\end{ex}

\section{Theorems}
\pn Here we will prove three results about 4-cycles in special
cases.

\begin{prop} There are no 4-cycles in a right pyramid, i.e. in a
section of the first octant. \end{prop}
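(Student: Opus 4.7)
The plan is to pass to velocity space and exploit the special algebraic structure of the three mutually orthogonal reflections at the apex $O$. Place the pyramid in standard position, with $O$ at the origin and $A,B,C$ on the positive coordinate axes; let $M_1,M_2,M_3$ be the reflections in the three coordinate-plane faces, and $M_4$ the reflection in the hypotenuse face $ABC$, whose unit normal $\hat n$ has all components positive. The matrices $M_1,M_2,M_3$ are diagonal with a single $-1$ entry, so they pairwise commute, and their product in any order equals $-I$. In particular, $M_iM_j=-M_k$ whenever $\{i,j,k\}=\{1,2,3\}$.

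Assume a 4-cycle exists. By the discussion at the start of Section~2, its velocity direction $\bar v_0$ is a $1$-eigenvector of the composite rotation $R_4R_3R_2R_1$, where $R_1,\dots,R_4$ are the four reflections in the order they occur. Suppose $M_4$ sits in position $k$, and write the composition as $QM_4P$, where $P$ is the product of the coordinate reflections before $M_4$ and $Q$ the product of those after. A short case analysis on $k\in\{1,2,3,4\}$, using $M_1M_2M_3=-I$, reduces the composition to $-M_4$ (when $k=1$ or $k=4$) or to $-M_iM_4M_i$ for an appropriate $i\in\{1,2,3\}$ (when $k=2$ or $k=3$). Accordingly $\bar v_0$ must be parallel to $\hat n$ or to $M_i\hat n$.

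The punchline is that, in every case, the velocity $\bar v_{k-1}=P\bar v_0$ entering the hypotenuse reflection equals $\pm\hat n$: direct substitution using $M_i^2=I$ confirms this in all four cases. Geometrically, the trajectory hits $ABC$ \emph{perpendicularly} at $P_k$, so $M_4$ simply reverses direction, $\bar v_k=-\bar v_{k-1}$. Hence the outgoing segment $P_kP_{k+1}$ is the exact reverse of the incoming segment $P_{k-1}P_k$, the trajectory retraces along the same line, and the next reflection point satisfies $P_{k+1}=P_{k-1}$. But in a proper 4-cycle $P_{k-1}$ and $P_{k+1}$ lie on distinct faces; their coincidence would force them onto an edge of the tetrahedron, a degenerate situation excluded from genuine 4-cycles. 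This is the contradiction.

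The algebraic bookkeeping is routine once the commutativity of $M_1,M_2,M_3$ and the identity $M_1M_2M_3=-I$ are noted; the main conceptual step, and the one I would take care to present cleanly, is the geometric consequence of perpendicular incidence on the hypotenuse, which forces the trajectory to retrace and collapses two of the four reflection points onto a common edge.
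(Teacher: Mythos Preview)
Your proof is correct and rests on the same idea as the paper's: the three coordinate reflections compose to $-I$, which forces the velocity at the $ABC$-reflection to be normal to that face, and this is incompatible with a genuine 4-cycle. The paper dispenses with your case analysis by simply taking the starting point in $\Delta ABC$ (so $M_4$ is automatically the last reflection), and it compresses your retracing argument into the single line ``the returning point cannot be $S$''.
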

\begin{proof} Let $P=ABCO$ be  a pyramid, where $A$ is a point in the
positive $x$-axis, $B$ --- a point in the positive $y$ --- axis,
$C$ --- a point in the positive $z$-axis and $O$ is the origin. A
billiard trajectory begins at a point $S\in \Delta ABC$ with a
vector $\bar v$. After three reflections from planes $xy$, $xz$
and $yz$ it returns to $\Delta ABC$ with the directing vector
$-\bar v$. The reflection from $\Delta ABC$ must transforms $-\bar
v$ into $\bar v$, hence, the vector $\bar v$ is orthogonal to
$ABC$. But in this case the returning point cannot be $S$.
\end{proof}

\begin{prop} If a pyramid has the right dihedral angle, then it
has not more, than two 4-cycles. \end{prop}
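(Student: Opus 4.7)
The plan is to use the fact that reflections in orthogonal planes commute, together with the resulting half-turn, to link two of the three possible 4-cycles by a symmetry that forces their starting points to be mirror images across the edge of the right angle.

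After a rigid motion and relabeling, I would assume the right dihedral angle is at edge $AB$, with $\Delta ABC$ in the plane $z=0$ on the $y>0$ side of $AB$, and $\Delta ABD$ in the plane $y=0$. Let $a,b,c,d$ denote the reflections in the planes of $ABC, ABD, ACD, BCD$ respectively. Orthogonality of the first two planes gives $ab=ba=H$, the half-turn of $\mathbb{R}^3$ about $AB$. Of the three 4-cycles in Figure~1, the left and right (cycles $1$ and $3$) are exactly the ones whose cyclic orders place $ABC$ and $ABD$ adjacent, and with starting face $ABC$ their affine compositions are
\[
M_1 \;=\; a\,d\,c\,b, \qquad M_3 \;=\; a\,c\,d\,b,
\]
differing only by the swap of the two middle reflections.

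The crucial step is the affine identity
\[
H\,M_1\,H \;=\; M_3^{-1}.
\]
For the linear part one collapses $(ab)(adcb)(ab)$ via $ab=ba$ and $a^2=b^2=1$ to $bdca=(acdb)^{-1}$. For the translation part, among $a,b,c,d$ only $d$ is genuinely affine (the other three planes pass through $A$), and its translation $t_d$ satisfies $d\,t_d=-t_d$ since $d$ is a reflection; a short computation then matches the translation $b\,t_d$ produced by $H M_1 H$ with the translation $-bd\,t_d$ of $M_3^{-1}$.

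The identity shows that the axis of the affine rotation $M_3$ is the $H$-image of the axis of $M_1$. The starting point of cycle $1$ (resp.\ cycle $3$) on $\Delta ABC$ is the intersection $P$ (resp.\ $H(P)$) of that axis with the plane $z=0$, and since $H$ restricted to $z=0$ is the reflection across $AB$, the points $P$ and $H(P)$ lie on opposite sides of $AB$. Because $\Delta ABC$ lies strictly on one side of $AB$, at most one of $P$ and $H(P)$ can be in $\Delta ABC$, so at most one of cycles $1$ and $3$ is realised. Together with cycle $2$, which is unaffected by this argument and may or may not exist, this gives at most two 4-cycles. The main obstacle is verifying the affine part of $H M_1 H = M_3^{-1}$: the linear piece is essentially formal, while matching the translations requires the reflection property $d\,t_d=-t_d$ and careful bookkeeping of how the single affine shift coming from $d$ propagates through the various pre- and post-compositions.
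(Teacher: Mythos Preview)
Your argument is correct but takes a longer route than the paper's. The difference is purely in the labeling: you put the right dihedral angle at the edge $AB$ between the base $ABC$ and a lateral face $ABD$, whereas the paper puts it at a lateral edge $AD$ between the two lateral faces $ABD$ and $ACD$. With the paper's choice the two cycles in which the orthogonal faces are cyclically adjacent are $ABC\to ABD\to ACD\to BCD$ and $ABC\to ACD\to ABD\to BCD$; their affine compositions differ only by swapping the two \emph{middle} factors $r_{ABD}$ and $r_{ACD}$, and since those commute (orthogonal planes through the common line $AD$, hence commuting already as affine maps), the two compositions are literally equal. Same axis, same starting point, same initial ray --- and one ray cannot hit $ABD$ first and also $ACD$ first, so at most one of the pair exists. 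Your placement forces the commuting pair $a,b$ to the two ends of $adcb$ and $acdb$, so equality fails and you instead need the conjugation identity $HM_1H=M_3^{-1}$ plus the observation that $H$ reflects the base across $AB$. That works, but relabeling so that the base face is the one opposite the right-angle edge collapses the whole proof to one line.
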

\begin{proof} Let $ABCD$ be a pyramid, where $A=(0,0,0)$,
$B=(a,0,b)$, $C=(0,c,d)$ and $D=(0,0,e)$, i.e. the dihedral angle
at the edge $AD$ is $\frac \pi2$. We will consider 4-cycles
$$ABC\to ABD\to ACD\to BCD\to ABD \text{ and } ABC\to ACD\to
ABD\to BCD\to ABC$$ and will prove that both two cannot exist.
\pmn As faces $ABD$ and $ACD$ are orthogonal, then reflection
operators with respect to these faces commute. But then these
cycles have the same starting vector and, thus, the same starting
point. \end{proof} \pn Let a pyramid $ABCD$ be symmetric with
respect to the plane $CDE$, where $E$ is the middle point of the
edge $AB$. Then the altitude $DO$ belongs to $CDE$, lines $CE$ and $AB$ are
perpendicular and lines $DE$ and $AB$ are also perpendicular. Let
us consider the cycle $\mathcal{C}=ABC\to ACD\to ABD\to BCD\to
ABC$. Let $K,L,M,N$ be points of $\mathcal{C}$ that belong to
$\Delta ABC$, $\Delta ACD$, $\Delta ABD$ and $\Delta BCD$,
respectively. The cycle $ABC\to BCD\to ABD\to ACD\to ABC$ is the
same cycle, passed in the reversed direction. As the symmetry with
respect to the plane $CDE$ maps the first cycle into the second,
then the cycle $\mathcal{C}$ is symmetric: $K\in [C,E]$, $M\in
[D,E]$ and points $L$ and $N$ are symmetric. As the plane $LMN$
contains the normal vector to the plane $ABD$, then $LM\bot DE$.
\pmn Let us consider the pyramid $ACDB'$ which is symmetric to
$ABCD$ with respect to the plane $ACD$, and let points $E'$ and
$M'$ be symmetric to $E$ and $M$, respectively. Points $K,L,M'$
are collinear (because $KLM$ is the billiard trajectory) and $LM'\bot
DE'$ (because $LM\bot DE$). Thus, the starting vector
$\overline{KL}$ is orthogonal both to $CE$ and $DE'$.
\[\begin{picture}(155,150) \put(10,40){\line(0,1){75}}
\put(10,40){\line(4,-1){120}} \put(10,40){\line(5,6){75}}
\put(130,10){\line(1,6){15}} \put(85,130){\line(2,-1){60}}
\put(10,115){\line(5,1){75}} \qbezier[90](10,40)(77,70)(145,100)
\qbezier(85,130)(107,70)(130,10)
\qbezier[90](10,115)(77,107)(145,100)
\qbezier[40](70,25)(107,62)(145,100)
\qbezier[40](70,25)(77,77)(85,130)
\qbezier[40](85,130)(47,103)(10,77) \put(8,33){\tiny A}
\put(128,3){\tiny B} \put(148,98){\tiny C} \put(83,133){\tiny D}
\put(68,18){\tiny E} \put(0,113){\tiny B${}'$} \put(0,74){\tiny
E${}'$} \put(100,55){\circle*{2}} \put(100,48){\tiny K}
\put(100,55){\line(-4,3){30}} \put(70,77){\circle*{2}}
\put(70,80){\tiny L} \put(74,54){\circle*{2}}
\qbezier(70,77)(72,63)(74,54) \put(66,52){\tiny M}
\put(125,70){\circle*{2}} \qbezier(74,54)(100,62)(125,70)
\put(128,68){\tiny N} \put(100,55){\line(5,3){25}}
\put(42,98){\circle*{2}} \qbezier[30](70,77)(56,87)(42,98)
\put(33,99){\tiny M${}'$}
\end{picture}\]
\begin{center}Figure 5\end{center}
\pn Now we can describe the geometry of our 4-cycle.
\begin{prop} The staring point and the starting vector belongs to
the common perpendicular of two skew lines $CE$ and $DE'$.
\end{prop}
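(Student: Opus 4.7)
The plan is to show that the line $\ell$ through $K$ in the direction $\overline{KL}$ is exactly the common perpendicular of the skew lines $CE$ and $DE'$; then both the starting point $K$ and the starting vector $\overline{KL}$ lie on it. This requires four facts: $K\in\ell\cap CE$, $\ell\cap DE'\neq\emptyset$, $\overline{KL}\perp DE'$, and $\overline{KL}\perp CE$.

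The first is immediate from what precedes the proposition: $K\in[C,E]\subset CE$. For the second and third I would unfold the reflection at face $ACD$. Since $L\in ACD$ is fixed and $M$ is carried to $M'$, the points $K,L,M'$ are collinear, so $M'\in\ell$. Because $D\in ACD$, that same reflection sends $[D,E]\ni M$ to $[D,E']\ni M'$, placing $M'\in DE'\cap\ell$; and it carries the already-established relation $LM\perp DE$ into $LM'\perp DE'$, which together with collinearity of $K,L,M'$ gives $\overline{KL}\perp DE'$.

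The crux is $\overline{KL}\perp CE$. Here I would play the cycle symmetry $\sigma$ through plane $CDE$ against the billiard reflection at $K$ across face $ABC$. Since $\sigma$ fixes $K$ and swaps $L\leftrightarrow N$, it is an isometry carrying $\overline{KL}$ to $\overline{KN}=-\overline{NK}$; in particular $|KL|=|NK|$, and because the direction of $CE$ lies in the fixed plane $CDE$ it is preserved by $\sigma$, so one obtains
\[
\overline{KL}\cdot\overline{CE}=\sigma(\overline{KL})\cdot\sigma(\overline{CE})=-\overline{NK}\cdot\overline{CE}.
\]
On the other hand, the equal lengths turn the reflection law at $K$ into the vector identity $\overline{KL}=R(\overline{NK})$, where $R$ is reflection across $ABC$; since $\overline{CE}\subset ABC$ is fixed by $R$, this gives $\overline{KL}\cdot\overline{CE}=\overline{NK}\cdot\overline{CE}$. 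The two relations force $\overline{KL}\cdot\overline{CE}=0$, and $\ell$ is thereby identified with the common perpendicular of $CE$ and $DE'$.

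The main obstacle is the last paragraph, where one has to reconcile the two roles of $\overline{NK}$—as a displacement vector (under $\sigma$) and as a scaled incoming-velocity vector (under the reflection law)—so that the two inner-product identities may be compared. The symmetric equal-length fact $|KL|=|NK|$, itself a free consequence of $\sigma$, is precisely what makes the reflection law usable at the level of displacements and closes the argument.
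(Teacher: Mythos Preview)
Your proof is correct and follows the paper's own argument, which is laid out in the discussion immediately preceding the proposition: the symmetry $\sigma$ places $K\in CE$ and $M\in DE$; unfolding through $ACD$ makes $K,L,M'$ collinear with $M'\in DE'$ and carries $LM\perp DE$ to $LM'\perp DE'$. The only difference is in how $\overline{KL}\perp CE$ is obtained: the paper's (implicit, by analogy with its $LM\perp DE$ step) argument is that the plane $NKL$ contains both the normal to $ABC$ and the direction $\overline{LN}\perp CDE$, hence is perpendicular to $CE\subset ABC\cap CDE$; you instead compute $\overline{KL}\cdot\overline{CE}$ two ways via the isometries $\sigma$ and $R$, which is an equivalent algebraic rendering of the same two ingredients.
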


\begin{conj} If a pyramid has $k$ obtuse dihedral angles, then the
number of its 4-cycles is not more, than $3-k$. \end{conj}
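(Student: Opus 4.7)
My proof plan rests on a combinatorial correspondence between the three possible 4-cycles and the three pairs of opposite edges of the tetrahedron. Each 4-cycle visits all four faces, its four consecutive pairs of faces share four of the six edges, and the two remaining edges necessarily form a pair of opposite edges (sharing no vertex). Reading off the edge sequences from Figure~1, cycle~1 corresponds to the opposite pair $\{AC,BD\}$, cycle~2 to $\{AB,CD\}$, and cycle~3 to $\{AD,BC\}$. In particular, each edge $e$ of the pyramid is used as a reflection transition by exactly two of the three cycles --- namely those whose associated opposite pair does not contain $e$.

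Given this, the plan is to prove an injective statement: for every edge $e$ at which the dihedral angle is obtuse, one specific cycle out of the two using $e$ is destroyed, and distinct obtuse edges destroy distinct cycles. Combined with the total of three cycles, this yields the bound $3-k$.

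First, I would revisit Proposition~4.2 to localize the killing mechanism at the borderline $\theta=\pi/2$. There the two reflections $R_i$ and $R_{i+1}$ across the faces meeting along $e$ commute, so the two cycles obtained by swapping the order of these reflections produce the same composition $M_4M_3M_2M_1$, share the same starting vector, and --- by the uniqueness part of Proposition~2.1 --- share the same starting point. Hence at least one of them has a reflection point outside the corresponding face and fails to be an honest cycle. I would identify \emph{which} one via a local unfolding computation using the reflected pyramids $ABC_1D,\,AB_1C_1D,\,A_1B_1C_1D$ of Section~2: the destroyed cycle is determined by the side of the edge $e$ into which the exit point drifts as $\theta-\pi/2$ changes sign, and the sign of the derivative of the exit point with respect to $\theta$ should depend only on the combinatorics of which face is intersected.

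Second, I would extend this across the open obtuse regime by a continuity/monotonicity argument modeled on the trichotomy of Section~3: as $\theta$ at edge $e$ moves from $\pi/2$ toward $\pi$, the branch identified above remains infeasible (its exit point stays outside the face), while the other branch varies smoothly and may or may not survive for other reasons. Applying this edge-by-edge over the $k$ obtuse edges, and using that distinct opposite pairs index distinct cycles, yields at most $3-k$ surviving cycles.

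The main obstacle is the non-local step: showing that an obtuse angle at $e$ genuinely forbids its designated cycle \emph{irrespective} of the remaining five dihedrals. Existence of a cycle is a global condition --- the unfolded segment must pierce the interior of three triangular windows --- and it is not clear a priori that obtuseness at a single edge cannot be compensated by favorable values of the other five angles. I expect this is where the full proof will be hardest, and I would try to bypass local perturbation entirely by encoding the feasibility of each cycle as a polynomial inequality in the cosines of the six dihedral angles (derived from the eigenvector of $M_4M_3M_2M_1$ together with the three barycentric-interiority conditions), and then showing algebraically that this inequality forces the designated dihedral to be acute. If such an algebraic witness exists, the conjecture will reduce to checking sign patterns of these polynomials on the appropriate strata of parameter space.
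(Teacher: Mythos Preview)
The paper labels this statement a \emph{conjecture} and offers no proof whatsoever; there is therefore nothing on the paper's side to compare your attempt against. What you have written is, appropriately, a research outline rather than a proof, and you correctly identify the non-local feasibility step as the principal obstacle.

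There is, however, a second gap that you pass over too quickly. Your counting hinges on the assertion that ``distinct obtuse edges destroy distinct cycles,'' which you support only by the remark that distinct opposite pairs index distinct cycles. That bijection is between \emph{pairs} of opposite edges and cycles, not between single edges and cycles, and it does not deliver the injectivity you need. Concretely, two \emph{opposite} edges --- say $AC$ and $BD$ --- are used by exactly the same two cycles (your cycles~2 and~3); if both dihedrals are obtuse, your argument requires one edge to kill cycle~2 and the other to kill cycle~3, with no room to spare. Yet nothing in the local $\theta\to\pi/2$ perturbation you borrow from Proposition~4.2 tells you \emph{which} of the two candidate cycles a given obtuse edge actually kills, let alone that two opposite obtuse edges coordinate to kill different ones. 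The same issue arises for adjacent edges sharing a common cycle. Without settling this, even granting the non-local step in full, $k$ obtuse edges might destroy fewer than $k$ distinct cycles and the bound $3-k$ does not follow. This injectivity question is not subsumed by the obstacle you flag; it is an independent difficulty, and --- since the statement remains a conjecture in the paper --- presumably still open.
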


\section{Physical billiard. Computations}
\pn In this section we will study the movement of a mass point in
the presence of a gravity field with the constant $g$. A point
moves inside a triangular pyramid with elastic reflections (i.e.
without loss of the energy), thus, trajectories of the point
between reflections are segments of parabolas. \pmn It must be
noted that each billiard trajectory can be passed in both
directions. Thus, we must consider three different orders of
reflections. As opposed to the mathematical case, each order of
reflections can admit infinitely many trajectories. Computations
in the physical case are much more complex, than in the
mathematical one, because we have to work with \emph{nonlinear}
systems. We present here only results of computations and don't
formulate statements or conjectures. \pmn The above assumptions
about our pyramid are preserved: the base $ABC$ is in the $xy$
plane: $A$ at the origin, $B$ at the positive $x$-axis, $C$ in the
upper half-plane. The apex $D$ is in the upper half-space. The
gravity force with the constant $g$ is directed downwards. \pmn
\begin{ex} Let $A=(0,0,0)$, $B=(4,0,0)$, $C=(3,3,0)$, $D=(2,1,3)$.
Here
$$ABD:3y-z=0,\,\, ACD:3x-3y-z=0,\,\,BCD:-9x-3x-5z+36=0$$ --- are equations of
planes and
$$\bar{n}_1=(0,3,-1),\,\,\bar{n}_2=(3,-3,-1),\,\,\bar{n}_3=(-9,-3,-5)$$
--- are corresponding normal vector (all of them are directed inside
the pyramid). Let the order of reflections be $ABD\to ACD\to
BCD\to ABC$, the starting point be $p_1=(a,b,0)$, the starting
vector be $\bar{v}_1=(k,l,m)$, $m>0$. \pmn We introduce new
variables $t_1$
--- the time interval from the start to the encounter with the
$ABD$ plane, $t_2$ --- the time interval between encounters with
planes $ABD$ and $ACD$, $t_3$ --- the time interval between
encounters with planes $ACD$ and $BCD$ and $t_4$
--- the time interval between the encounter with the plane $BCD$
and the return to the starting point. \pmn The coordinates of the
velocity vector $\bar{v}_2$ in the moment of the encounter with
the $ABD$-plane are $\bar{v}_2=(k,l,m-g\cdot t_1)$. At this moment
the position $p_2$ of our mass point has coordinates
$$p_2=\left(a+k\cdot t_1,b+l\cdot t_1,m\cdot t_1-g\cdot
t_1^2/2\right)\,.$$ Thus, we have the first equation:
$$3\cdot(b+l\cdot t_1)-\left(m\cdot
t_1-g\cdot t_1^2/2\right)=0  \eqno({\rm Eq. 1})$$ Let $s$ be the
dot product $s=(\bar{v}_2,\bar{n}_1)$, then
$$\bar{v}_3=\left(k,l-\frac{6s}{10},m-g\cdot
t_1+\frac{2s}{10}\right)$$ is the velocity vector after the
reflection from the $ABD$-plane. \pmn The next step: we find the
velocity vector $\bar{v}_4$ in the moment $t_1+t_2$, i.e. in the
moment of the contact with the plane $ACD$:
$$\bar{v}_4=(\bar{v}_3[1],\bar{v}_3[2],\bar{v}_3[3]-g\cdot t_2)$$
and the position $p_3$ of our mass point at this moment:
$$p_3=(p_2[1]+\bar{v}_3[1]\cdot t_2,p_2[2]+\bar{v}_3[2]\cdot
t_2,p_2[3]+\bar{v}_3[3]\cdot t_2-g\cdot t_2^2/2).$$ This gives us
the second equation:
$$3\cdot p_3[1]-3\cdot p_3[2]-p_3[3]=0. \eqno({\rm Eq. 2})$$ In
the same manner we obtain the third equation. \pmn The contact of
the mass point with the base $ABC$ gives us 6 equations: we must
come to the starting point with the velocity vector
$\bar{v}_8=(k,l,-m)$. Thus, we have a nonlinear system of nine
equations and ten variables $\{a,b,k,l,m,g,t_1,t_2,t_3,t_4\}$.
\end{ex}
\pn Given such system we find the Groebner basis for the
lexicographic order $\{a,b,k,l,m,g,t_4,t_3,t_2,t_1\}$ and get the
following results (in generic case).
\begin{enumerate}
\item We have two independent variables $t_2$ and $t_3$ and all
other variables are their rational functions. \item Numerators and
denominators of these fractions  are homogeneous polynomials in
$t_2$ and $t_2$:
\begin{enumerate}
\item the degrees of numerators and denominators of $t_1$ and
 $t_4$ are two and one, respectively;
\item the degrees of the numerator and the denominator of $g$ are
 three and  five, respectively;
\item the degrees of numerators and denominators of $k$, $l$ and
 $m$ are four and five respectively;
\item the degrees of numerators and denominators of $a$ and $b$
are five and five.
\end{enumerate}
\item The geometry of a trajectory depends only on the ratio
$t=t_3/t_2$. The parameter $t_2$ defines the duration of the cycle
passage.
\end{enumerate}
\pn {\bf The continuation of Example 5.1.} Demands on the
positivity of $g,t_1,t_2,t_3,t_4$ and demands on the positions of
reflection points (inside faces) define an admissible interval for
$t$. In our case $0<t<0.48$. Admissible starting points constitute
a curve $s(t)$ inside $\Delta ABC$, where $G=s(0)\approx(2,1)$ and
$H=s(0.48)\approx(2.6,0.8)$ (Figure 6).
\[\begin{picture}(100,80) \put(10,10){\line(1,0){80}}
\put(10,10){\line(1,1){60}} \put(90,10){\line(-1,3){20}}
\put(5,3){\tiny A} \put(92,3){\tiny B} \put(69,73){\tiny C}
\put(50,30){\circle*{2}} \put(62,26){\circle*{2}}
\qbezier(50,30)(56,30)(62,26) \put(43,28){\tiny G}
\put(64,24){\tiny H} \end{picture}\]
\begin{center} Figure 6\end{center}

\begin{rem} For the above pyramid there exist three families of
billiard trajectories for each order of reflections.\end{rem}
\begin{rem} The pyramid $ABCD$: $A=(0,0,0)$, $B=(4,0,0)$, $C=(3,3,0)$,
$D=(3,2,1)$ has three obtuse dihedral angles and there are no
4-cycles in it.\end{rem} \begin{rem} The pyramid $ABCD$:
$A=(0,0,0)$, $B=(9,0,0)$, $C=(6,3,0)$, $D=(6,2,4)$ has obtuse
angle $ACB$ in the base $\Delta ABC$ and only one order of
reflections: $\Delta ABC \to \Delta ACD \to \Delta ABD \to \Delta
BCD \to \Delta ABC$ produces a family of 4-cycles. \end{rem} \pn
Outside the scope of generic cases, symmetric cases are the most
interesting.
\begin{ex} Let us consider the symmetric pyramid $ABCD$:
$A=(0,0,0)$, $B=(6,0,0)$, $C=(3,4,0)$, $D=(3,2,4)$. Let $CH$ be
the altitude of $\Delta ABC$. There exists a family of 4-cycles
for the reflection order $\Delta ABC\to \Delta ACD \to \Delta ABD
\to \Delta BCD \to \Delta ABC$. The starting point always belongs
to $CH$, the starting vector is orthogonal to the $y$-axis. A
trajectory meets $\Delta ABD$ at a point with $x$-coordinate 3.
\end{ex}

\vspace{1cm}

\begin{thebibliography}{99}
\bibitem{GZ} G. Galperin and A. Zemlyakov, Mathematical billiards,
Moscow, Nauka, 1990 [in Russian].
\bibitem{Ta} S. Tabachnikov, Geometry and billiards, American Math. Soc.,
2012.
\bibitem{TZ} S. Tabachnikov and D. Zvonkine, Periodic billiard
trajectories in right triangles and right-angled tetrahedra, Regular and
Chaotic Dynamics, 8(1), 2003, 29-44.
\bibitem{Be} N. Bedaride, Periodic billiard trajectories in polyhedra,
arXiv.math: 1104.1051.
\end{thebibliography}
\end{document}